
\documentclass[12pt,psamsfonts]{amsart}
\usepackage{amsmath,amsthm,amsfonts,amssymb}
\usepackage{eucal}
\usepackage{graphicx}

\addtolength{\evensidemargin}{-.4in}
\addtolength{\oddsidemargin}{-.4in}
\addtolength{\textwidth}{.8in}

\numberwithin{equation}{section}

\newtheorem{theorem}[equation]{Theorem}

\newtheorem{corollary}[equation]{Corollary}
\newtheorem{lemma}[equation]{Lemma}

\newtheorem{prop}[equation]{Proposition}
\theoremstyle{definition}

\newtheorem{rmk}[equation]{Remark}

\newtheorem{example}[equation]{Example}
\newtheorem{defn}[equation]{Definition}

\newcommand{\g}{\mathfrak{g}}

\newcommand{\R}{{\mathbb{R}}}
\newcommand{\C}{{\mathbb{C}}}
\newcommand{\Q}{{\mathbb{Q}}}

\newcommand{\Z}{{\mathbb{Z}}}

\newcommand{\im}{\mathrm{i}}

\newcommand{\ot}{\otimes}
\newcommand{\Rep}{\mathrm{Rep}}
\newcommand{\Spec}{\mathrm{Spec}}
\newcommand{\GL}{\mathrm{GL}}

\newcommand{\End}{\mathrm{End}}
\newcommand{\Hom}{\mathrm{Hom}}

\newcommand{\SL}{\mathrm{SL}}
\newcommand{\PSL}{\mathrm{PSL}}
\newcommand{\Sp}{\mathrm{Sp}}
\newcommand{\PSp}{\mathrm{PSp}}

\newcommand{\1}{{1 \hspace{-0.35em} {\rm 1}}}

\newcommand{\mS}{\mathcal{S}}

\newcommand{\B}{\mathcal{B}}
\newcommand{\CC}{\mathcal{C}}

\newcommand{\lan}{\langle}
\newcommand{\ra}{\rangle}
\newcommand{\ve}{\varepsilon}

\newcommand{\la}{{\lambda}}

\begin{document}
\title[Finite Quotients of $\B_3$]
{Finite Linear Quotients of $\B_3$ of Low Dimension}

\author{Eric C. Rowell}
\email{rowell@math.tamu.edu}
\address{Department of Mathematics\\
    Texas A\&M University \\
    College Station, TX 77843-3368\\
    U.S.A.}
\author{Imre Tuba}
\email{ituba@mail.sdsu.edu}
\address{San Diego State University\\
Imperial Valley Campus\\
720 Heber Ave.\\
Calexico, CA 92231\\
U.S.A.}
\subjclass[2000]{Primary 20F36; Secondary 20C15}
\thanks{Eric Rowell was partially supported by NSA grant H98230-08-1-0020.}

\begin{abstract}
We study the problem of deciding whether or not the image of an irreducible representation of the braid group $\B_3$ of degree
$\leq 5$ has finite image if we are only given the eigenvalues of a generator.   We provide a partial algorithm that determines when the images are finite or
infinite in all but finitely many cases, and use these results to study examples coming from quantum groups.  Our technique uses two classification theorems and the computational group theory package GAP.
\end{abstract}
\maketitle
\section{Introduction}
Let $\B_3$ denote Artin's braid group on $3$ strands with generators $\sigma_1$ $\sigma_2$, satisfying
 $$\sigma_1\sigma_{2}\sigma_1=\sigma_{2}\sigma_1\sigma_{2}.$$

We consider the following problem.
Suppose $\phi:\B_3\rightarrow GL(V)$ is a $d$-dimensional complex representation  for which we are given:
\begin{enumerate}
\item $\rho$ is irreducible and
\item $\Spec(\rho(\sigma_1))=\{\la_1,\ldots,\la_d\}.$
\end{enumerate}
Can we determine if $G:=\rho(\B_3)$ is a finite or infinite group from this
information?  In this paper we
determine some conditions under which $G$ is finite or infinite, assuming that
$d\le 5$.

The general question of determining the image of complex braid group representations
seems first to have been studied by Jones \cite{Jones1} for the representations
associated with the Jones polynomial.  Indeed, the Burau representation was essentially the only representation of $\B_n$ that was known until the 1980s.  More recently this question has been studied extensively for unitary representations
obtained from solutions to the Yang-Baxter equation \cite{FRW}, Hecke algebras
\cite{FLW1,FLW2}, and BMW-algebras \cite{LRW,LR}.  In all of these cases one has a tower  of (generally reducible) representations $\rho_n$ such that $$\rho_3(\B_3)\subset\cdots\subset\rho_n(\B_n)\subset\rho_{n+1}(\B_{n+1})\subset\cdots.$$  So clearly it is enough to show that $\rho_3(B_3)$ is infinite to conclude the same for the tower.  One motivating application of this is to answer the question of universality in the setting of topological quantum computation (see \cite{FLW2}).

A generalization of the above towers of representations comes from \emph{ribbon categories}, see \cite{TurBook} for the definition.  For any object $X$ in a ribbon category $\CC$ one obtains
a tower of representations of $\B_n$ acting on $\End(X^{\ot n})$.  If the object is a self-dual object (i.e. $X \cong X^*$) then $\B_3$ acts on $V=\Hom(X,X^{\ot  3})$, often irreducibly.  As an application, we will apply our results to ribbon categories coming from quantum groups in Section \ref{aps}.
\section{Main Result}
Throughout this paper let $\rho:\B_3\rightarrow\GL(V)$ be a $d$-dimensional irreducible representation with $2\leq d\leq 5$ and set $A=\rho(\sigma_1)$ and $B=\rho(\sigma_2)$.  Let $G$ denote the image  $\rho(\B_3)=\lan A,B\ra$ i.e. the group generated by $A$ and $B$ and
define $\mS:=\Spec(A)=\Spec(B)$.  The analysis naturally breaks into imprimitive and primitive cases (defined in Section \ref{genres}).
Our results are summarized in the following:
\begin{theorem}\label{maintheorem}  Let $\rho$, $G$, $A$ and $\mS$ be defined as
in the previous paragraph.
Let $\mS=\{\la_1,\ldots,\la_d\}$, and define the projective order of $A$ by
$$po(A):=\min\{t: (\la_1)^t=(\la_2)^t=\cdots=(\la_d)^t\}.$$   We use the convention
that each successive statement excludes the hypotheses of all
of the preceding cases.
\begin{enumerate}
\item[(a)] Suppose some $\la_i$ is not a root of unity, or $\la_i=\la_j$ for some $i\neq j$.  Then $G$ is \textbf{infinite}.
\item[(b)] Suppose $po(A)\leq 5$.  Then $G$ is \textbf{finite}.
\item[(c)] Suppose $G$ is imprimitive.  Then $\mS$ is of the form:
\begin{enumerate}
 \item[(i)] $\{\pm\chi,\alpha\}$ or $\chi\{1,\omega,\omega^2\}\cup\{\alpha\}$
with $\omega$ a primitive 3rd root of unity and $G$ is finite or
\item[(ii)] $\{\pm r,\pm s\}$.  In this case if $u=r/s$
is a root of unity of order $o(u)\in \{7,8,9\}\cup[11,\infty)$ then $G$ is \textbf{infinite}, if $o(u)=6$, $G$ is \textbf{finite} and if $o(u)=5$ or $10$
one cannot decide $|G|$ without further information.
\end{enumerate}
\item[(d)] Suppose $G$ is primitive. Then:
\begin{enumerate}
\item[(i)] If $d=2$ then $G$ is \textbf{infinite}.
\item[(ii)] If $d=3$ and $po(A)\geq 8$ then $G$ is \textbf{infinite}.  If $po(A)=7$, and $\frac{1}{\la_1}\mS$ is Galois conjugate to $\{1,e^{2\pi\im/7},e^{2k\pi\im/7}\}$ with $k$ even, $G$ is \textbf{infinite}, whereas if
$k$ is odd, $G$ is \textbf{finite}.
\item[(iii)] If $d=4$ and $po(A)\not\in\{6,\ldots,10,12,15,20,24\}$ then $G$ is \textbf{infinite}.
\item[(iv)] If $d=5$ and $po(A)\in\{7,8\}\cup[13,\infty)$ then $G$ is \textbf{infinite}.
\end{enumerate}
\end{enumerate}
\end{theorem}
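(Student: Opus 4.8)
The plan is to combine two classification theorems with elementary bookkeeping on scalars and projective orders, using GAP only to settle the finitely many residual cases. The first input is the Tuba--Wenzl description of the irreducible representations of $\B_3$ of dimension $d\le 5$ in terms of $\mS=\Spec(\sigma_1)$, which gives explicit matrices $A,B$ as functions of the $\la_i$ subject to a short list of genericity constraints; the second is the classical classification of finite primitive subgroups of $\GL_d(\C)$ for $d\le 5$. Using the stated convention, when proving a given part one may assume all $\la_i$ are distinct roots of unity and $po(A)\ge 6$.

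Parts (a) and (b) should come first and be the easy ones. If some $\la_i$ is not a root of unity then $A$ has infinite order; if instead $\la_i=\la_j$, I would invoke the classification to see that $\rho$ is either reducible (excluded) or lies in one of a few explicit families in which $A$, or a short word in $A$ and $B$, has a nontrivial unipotent Jordan block and hence infinite order. For (b), since $A^{po(A)}$ is scalar, the image $\overline G$ of $G$ in $\PGL(V)$ is a quotient of $\B_3/\langle\!\langle\sigma_1^{\,po(A)}\rangle\!\rangle$, which by a classical computation of Coxeter is finite exactly when $\tfrac12+\tfrac13+\tfrac1{po(A)}>1$, i.e. for $po(A)\le 5$. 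Because $\det\circ\rho$ factors through $\B_3^{\mathrm{ab}}\cong\Z$ and the $\la_i$ are roots of unity, the scalar subgroup $G\cap\C^\times\1$ consists of roots of unity and is finitely generated once $\overline G$ is finite, hence finite; so $G$ is finite.

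For (c) I would feed imprimitivity into the classification: an invariant decomposition $V=\bigoplus_i V_i$ forces $A$ to be monomial for a suitable refinement, and matching the resulting eigenvalue-ratio constraints against the Tuba--Wenzl parameter restrictions should leave exactly the spectra in (i) and (ii); the representations in (i) then visibly have finite monomial image. In case (ii), writing $\mS=r\{\,1,-1,u,-u\,\}$ with $u=r/s$, the representation is induced from a $2$-dimensional representation of the index-$2$ subgroup of $\B_3$, and finiteness of $G$ is governed by $o(u)$: for $o(u)\in\{7,8,9\}\cup[11,\infty)$ the induced data contains an infinite Coxeter/dihedral subquotient, for $o(u)=6$ the image is finite, and for $o(u)\in\{5,10\}$ the classification should produce, for the same $\mS$, both a finite-image and an infinite-image representation, so $\mS$ alone cannot decide $|G|$.

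Part (d) is where the real work lies, and the hardest point. If $G$ is finite and primitive it must be a twist of one of the finitely many finite primitive subgroups of $\GL_d(\C)$, so $\bar A\in\overline G\subset\PGL_d(\C)$ has order $po(A)$ lying in the finite set $T_d$ of element orders occurring in finite primitive subgroups of $\PGL_d(\C)$. Reading $T_d\cap[6,\infty)$ off the classification should give $\emptyset$ for $d=2$, $\{6,7\}$ for $d=3$, $\{6,7,8,9,10,12,15,20,24\}$ for $d=4$, and $\{6,9,10,11,12\}$ for $d=5$, which is exactly the asserted list of ``infinite'' ranges. For the finitely many remaining pairs $(d,po(A))$ — and the finitely many Galois-conjugacy classes of admissible $\mS$ at each — I would write down the explicit Tuba--Wenzl matrices $A,B$ and have GAP decide whether $\langle A,B\rangle$ is finite, exhibiting an infinite-order element (or an infinite subquotient) whenever it is not; the parity refinement at $d=3$, $po(A)=7$ is of precisely this kind. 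The hard part is exactly this borderline analysis: near the finiteness threshold neither finiteness nor infiniteness is visible from any coarse invariant, so one genuinely has to descend to explicit representations and either recognize a finite primitive group on the nose or produce an infinite-order witness — which is only feasible because the small-degree classification of finite primitive linear groups keeps the list of exceptional cases finite.
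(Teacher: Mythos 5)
Your proposal is correct and follows essentially the same route as the paper: Coxeter's finiteness criterion for $\B_3/\langle\sigma_1^p\rangle$ for parts (a)--(b), the Tuba--Wenzl classification to pin down the imprimitive spectra and supply explicit matrices, and Feit's list of finite primitive linear groups of degree $\le 5$ (with GAP computing the admissible projective orders) for part (d), descending to explicit matrix computations in the borderline cases. The only cosmetic differences are that the paper handles repeated eigenvalues via the coincidence of minimal and characteristic polynomials from Tuba--Wenzl rather than a case analysis, and treats the $\{\pm r,\pm s\}$ case by a discriminant computation on the characteristic polynomial of $AB^{-1}$ rather than through the language of induced representations.
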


\begin{rmk}
Notice that this theorem covers all but the following cases: $G$ primitive with
\begin{enumerate}
\item $d=4$ with $po(A)\in\{6,\ldots,10,12,15,20,24\}$.
\item $d=5$ with $po(A)\in\{6,9,10,11,12\}$.
\end{enumerate}
So Theorem \ref{maintheorem} can be used as an algorithm to decide the if $G$ is finite or not, at least for $d\le 3$ or $G$ imprimitive.
\end{rmk}

There are two main ingredients to our approach.  Necessary
conditions can be derived from the classification of finite primitive linear groups of low degree (see \cite{Ft}), while
sufficient conditions can often be gleaned from the following classification of irreducible representations of $\B_3$ for $2\leq d\leq 5$ found in \cite{TbWz}:
\begin{theorem}[Tuba-Wenzl]\label{tw}

 \begin{enumerate}
  \item Suppose $\rho$ is an irreducible representation of $\B_3$ with eigenvalues
$\mS=\{\la_1,\ldots,\la_d\}$
as above.  Then,
\begin{enumerate}
\item $\rho$ is uniquely determined up to equivalence by $\mS$ up to a choice of $\gamma^2$ for $d=4$ and a choice of $\gamma$ for $d=5$ where $\gamma:=\det(A)^{1/d}$.
\item there exists a basis for $\C^d$ so that the matrices $A$ and $B$ are in a triangular form, given in
\cite{TbWz}.
\end{enumerate}
\item There exists an irreducible representation of $\B_3$ with eigenvalues $\mS$ if only if the $\la_i$ and $\gamma$ do not satisfy certain polynomials.  In particular for $d=3$ and $d=4$ these polynomials are:
\begin{enumerate}
\item $d=3$ $\la_r^2+\la_s\la_t$ for $r,s,t$ distinct and
\item $d=4$ $(\la_r^2+\gamma^2)(\gamma^2+\la_r\la_s+\la_t\la_u)$ for $r,s,t,u$ distinct.
\end{enumerate}
\end{enumerate}
\end{theorem}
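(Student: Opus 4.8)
The plan is to prove the classification constructively, by producing an explicit triangular normal form for the pair $(A,B)=(\rho(\sigma_1),\rho(\sigma_2))$ and then solving the braid relation $ABA=BAB$ entry by entry. The starting observations are structural. First, since $\sigma_1$ and $\sigma_2$ are conjugate in $\B_3$ (indeed $\sigma_2=(\sigma_1\sigma_2)\sigma_1(\sigma_1\sigma_2)^{-1}$), the matrices $A$ and $B$ share the spectrum $\mS$, which is exactly the identification $\Spec(A)=\Spec(B)$ already recorded above. Second, the full twist $(\sigma_1\sigma_2)^3=(\sigma_1\sigma_2\sigma_1)^2$ is central in $\B_3$, so by Schur's Lemma and irreducibility $\rho((\sigma_1\sigma_2)^3)=\mu\,\one$ for a scalar $\mu$; taking determinants and using $\det A=\det B=\prod_i\la_i$ relates $\mu$ to $\gamma=\det(A)^{1/d}$ and explains why a root of $\det A$ is the natural extra parameter.

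First I would establish the normal form, which is the technical heart. Choosing the complete flag fixed by $A$, write $A$ as lower triangular with diagonal $(\la_1,\dots,\la_d)$. The claim is that irreducibility forces a complementary rigidity on $B$: after conjugating by the remaining diagonal and unipotent freedom that preserves $A$, one can arrange $B$ to be upper triangular with the reversed diagonal $(\la_d,\dots,\la_1)$, so that $A$ and $B$ become transpose-like mirror images. The argument is that any common invariant subspace would have to be compatible with both flags, contradicting irreducibility unless the off-diagonal entries are nonzero and tightly linked; this is where one uses that $V=\Hom(X,X^{\ot 3})$ has no proper $\lan A,B\ra$-submodule. I would carry this out separately for $d=2,3,4,5$, since the bookkeeping of invariant subspaces is dimension-specific.

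With the normal form in hand, the braid relation becomes a finite system of polynomial equations in the off-diagonal entries and the $\la_i$. I would solve it by comparing entries of $ABA$ and $BAB$ along successive (anti)diagonal bands: the diagonal comparison is automatic, and each subsequent band determines the next layer of entries as explicit rational functions of the $\la_i$ and of $\gamma$. The key point for part (1)(a) is to track how $\gamma$ enters: for $d\le 3$ the entries are rational in the $\la_i$ alone, for $d=4$ they involve only $\gamma^2$, and for $d=5$ they genuinely involve $\gamma$, which accounts for the stated ambiguity in the choice of root of $\det A$. Uniqueness up to equivalence then follows because any two solutions with the same $\mS$ and the same choice of $\gamma$ (resp.\ $\gamma^2$) are intertwined by a diagonal change of basis, giving part (1)(b) as the explicit triangular matrices.

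Finally, for part (2) I would determine exactly when the constructed pair $(A,B)$ fails to give an irreducible representation. Reducibility occurs precisely when the solved entries degenerate --- either a would-be nonzero off-diagonal entry vanishes or a denominator appearing in the recursion forces a relation among the eigenvalues --- and collecting these degeneracy loci yields the forbidden polynomials. For $d=3$ the single obstruction is $\la_r^2+\la_s\la_t=0$ for distinct $r,s,t$, and for $d=4$ it factors as $(\la_r^2+\gamma^2)(\gamma^2+\la_r\la_s+\la_t\la_u)$; I would verify that off these loci the explicit matrices are genuinely irreducible (e.g.\ by exhibiting a cyclic vector) and that on them no irreducible representation with that spectrum exists. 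The main obstacle throughout is the second step: there is no uniform, dimension-free proof that irreducibility yields the rigid mirror-triangular form, so the classification must be pushed through by hand in each dimension $2\le d\le 5$, and it is exactly this that limits the result to $d\le 5$.
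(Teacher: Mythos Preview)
The paper does not prove this theorem at all: Theorem~\ref{tw} is quoted verbatim from \cite{TbWz} as a classification result to be used as input, and no argument for it appears anywhere in the present paper. So there is nothing in the paper to compare your proposal against.

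That said, your sketch is broadly the strategy of the original Tuba--Wenzl argument: put $A$ in triangular form, exploit the centrality of $(\sigma_1\sigma_2)^3$ via Schur's Lemma, force $B$ into the opposite triangular form, and then solve the braid relation band by band, reading off the irreducibility obstructions as the vanishing loci of the resulting entries. Two points deserve correction. First, the orientation is reversed relative to the convention actually used here and in \cite{TbWz}: $A$ is upper triangular and $B$ is lower triangular (compare the explicit matrices displayed later in the paper). Second, your appeal to ``$V=\Hom(X,X^{\ot 3})$ has no proper $\lan A,B\ra$-submodule'' is misplaced: the theorem is about \emph{arbitrary} irreducible $\B_3$-representations of dimension $d\le 5$, not about Hom-spaces in a ribbon category; irreducibility is simply a hypothesis, and the ribbon-category realization appears only in the applications section. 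The genuine work in \cite{TbWz} is the careful case-by-case verification that the opposite-triangular ansatz for $B$ can always be achieved and that the recursion closes, which your outline acknowledges but does not carry out.
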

We will refer to this theorem as the \emph{TW classification}.

\section{General Results}\label{genres}
In this section we state some general results that will be used later.

\begin{lemma}Suppose $|G|<\infty$.  Then, $\mS$ consists of $d$ \emph{distinct} roots of unity.
\end{lemma}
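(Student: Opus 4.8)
The plan is to exploit the fact that a finite group has only finitely many distinct eigenvalues among its elements and that all such eigenvalues are roots of unity. First I would recall the standard fact that if $G$ is a finite subgroup of $\GL(V)$, then every element $g \in G$ has finite order, so $A = \rho(\sigma_1)$ satisfies $A^N = \1$ for some $N$; hence the minimal polynomial of $A$ divides $x^N - 1$, which forces every eigenvalue $\la_i \in \mS$ to be an $N$-th root of unity. This immediately gives that $\mS$ consists of roots of unity; the content of the lemma is really the word \emph{distinct}.

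For distinctness, the key point is that a finite subgroup of $\GL(V)$ is conjugate into a unitary group (by averaging any inner product over $G$), so $A$ is diagonalizable. If some eigenvalue $\la_i = \la_j$ with $i \neq j$ were repeated, then $A$ would have an eigenspace of dimension $\geq 2$. The plan is then to derive a contradiction with irreducibility of $\rho$ using the braid relation $ABA = BAB$. The cleanest route is to invoke the TW classification (Theorem \ref{tw}): an irreducible representation of $\B_3$ of dimension $2 \le d \le 5$ is determined by its eigenvalue multiset, and the explicit triangular forms given there show that $A$ has $d$ distinct eigenvalues — indeed the parametrization in \cite{TbWz} is set up precisely under the assumption that the eigenvalues are distinct, and one checks that a repeated eigenvalue makes the representation decomposable. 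Alternatively, and more self-containedly, one can argue directly: if $A$ has a repeated eigenvalue $\la$ on a subspace $W$ of dimension $\geq 2$, then since $\dim V \le 5$ and $A$ is diagonalizable, a short dimension count combined with the relation $B A B^{-1} = A^{-1} B^{-1} A B \cdot (\text{stuff})$ — more precisely, analyzing the $A$-eigenspace decomposition and how $B$ must intertwine it — produces a proper nonzero invariant subspace, contradicting (1).

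The step I expect to be the main obstacle is ruling out repeated eigenvalues without simply quoting the full strength of the TW classification: one wants a clean, low-tech argument that a repeated eigenvalue is incompatible with irreducibility. The honest approach is probably to lean on Theorem \ref{tw}(1)(b), which asserts the existence of the explicit triangular basis; inspecting those matrices (for each $d$ from $2$ to $5$) shows directly that the entries forcing irreducibility degenerate exactly when two $\la_i$ coincide. So concretely: (i) finite $\Rightarrow$ $A$ has finite order $\Rightarrow$ eigenvalues are roots of unity; (ii) finite $\Rightarrow$ $\rho$ unitarizable $\Rightarrow$ $A$ diagonalizable; (iii) if two eigenvalues coincide, feed this into the TW normal form (or the polynomial non-vanishing conditions in Theorem \ref{tw}(2)) to see that $\rho$ cannot be irreducible, contradicting the hypothesis. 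Combining (i)–(iii) yields that $\mS$ is a set of $d$ distinct roots of unity. I would also remark that part (a) of Theorem \ref{maintheorem} is essentially the contrapositive of this lemma, so the lemma is the natural first step in the overall development.
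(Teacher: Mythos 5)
Your steps (i) and (ii) match the paper exactly: finite order of $A$ gives diagonalizability and root-of-unity eigenvalues. The issue is step (iii), where you leave the mechanism for distinctness vague, and both of the concrete routes you suggest have problems. The polynomial non-vanishing conditions of Theorem \ref{tw}(2) cannot deliver distinctness: for $d=3$ the condition is $\la_r^2+\la_s\la_t\neq 0$, which is perfectly consistent with $\la_s=\la_t$, so ``feeding a repeated eigenvalue into those polynomials'' detects nothing. Inspecting the triangular normal forms is also not quite legitimate as stated, because those forms are constructed in \cite{TbWz} only after one knows the eigenvalue structure of $A$ is as rigid as possible; you cannot apply a normal form valid for irreducible representations to conclude that a hypothetical irreducible representation with a repeated eigenvalue fails to exist. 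The missing ingredient is the specific fact the paper quotes: Corollary 2.2 of \cite{TbWz} states that for an irreducible representation of $\B_3$ of dimension $d\le 5$, the minimal and characteristic polynomials of $A$ coincide. Combined with your step (ii) (diagonalizability), this immediately forces the $d$ eigenvalues to be distinct, since a diagonalizable matrix with a repeated eigenvalue has minimal polynomial of degree strictly less than $d$. So your overall architecture is the right one, but the distinctness step should cite that corollary rather than the classification theorem's normal forms or non-vanishing conditions.
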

\begin{proof}
 It is clear that if $A$ is of finite order, it is diagonalizable and its eigenvalues must be roots of unity.  Corollary 2.2 of  \cite{TbWz} states that the minimal and characteristic polynomials of $A$ coincide (for $d\leq 5$) so that the eigenvalues of $A$ must also be distinct.
\end{proof}

The following result due to Coxeter \cite{Cox} dates back to the 1950s:
\begin{prop}\label{coxprop}
The quotient of $\B_n$ by the normal subgroup generated by
$\sigma_1^p$ is finite if and only if $1/p+1/n>1/2$.
In particular, the quotient of $\B_3$ by the normal subgroup generated by $\sigma_1^p$
is finite if and only if
$p=2,3,4$ or $5$, where the quotient groups are
\begin{eqnarray}\label{qgps}
\B_3/\lan \sigma_1^p\ra\cong\begin{cases}S_3 & p=2\\
\SL(2,3) & p=3\\
H & p=4\\
\SL(2,5)\times\Z_5 & p=5
                             \end{cases}\end{eqnarray}
where $H$ is a non-split central extension of $S_4$ by $\Z_4$.
\end{prop}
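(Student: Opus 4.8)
The plan is to realize the quotient $\B_n/\lan\!\lan\sigma_1^p\ra\!\ra$, which I denote $Q_{n,p}$, as a unitary complex reflection group and to reduce its finiteness to positive-definiteness of an explicit Hermitian form. Because the generators $\sigma_i$ are mutually conjugate in $\B_n$, the normal closure of $\sigma_1^p$ equals the normal subgroup generated by all the $\sigma_i^p$; hence $Q_{n,p}$ admits the Shephard-type presentation with generators $s_1,\dots,s_{n-1}$ subject to $s_i^p=1$, the braid relations $s_is_{i+1}s_i=s_{i+1}s_is_{i+1}$, and $s_is_j=s_js_i$ for $|i-j|\ge 2$. I would represent $Q_{n,p}$ on $\C^{n-1}$ by letting each $s_i$ act as a pseudo-reflection of order $p$ that multiplies a root vector $a_i$ by $\zeta=e^{2\pi\im/p}$ and fixes the hyperplane $h(\,\cdot\,,a_i)=0$ of an invariant Hermitian form $h$.

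Next I would compute the Gram matrix $M$, $M_{ij}=h(a_i,a_j)$, normalized so that $h(a_i,a_i)=1$. Commutation for $|i-j|\ge 2$ makes the corresponding mirrors orthogonal, so $M_{ij}=0$ there, while the single nonelementary point is that the length-three braid relation between two order-$p$ reflections forces the common magnitude $|M_{i,i+1}|=\tfrac{1}{2\sin(\pi/p)}$ (for $p=2$ this recovers the type $A_{n-1}$ Cartan value $\tfrac12$). After absorbing phases by a diagonal unitary, $M$ becomes the real tridiagonal Toeplitz matrix with diagonal $1$ and off-diagonals $-t$, $t=\tfrac{1}{2\sin(\pi/p)}$, whose eigenvalues are $1-2t\cos(k\pi/n)$ for $k=1,\dots,n-1$. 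The least of these is $1-2t\cos(\pi/n)$, so $M$ is positive definite exactly when $\cos(\pi/n)<\sin(\pi/p)=\cos\!\bigl(\tfrac{\pi}{2}-\tfrac{\pi}{p}\bigr)$, that is, when $\tfrac1p+\tfrac1n>\tfrac12$; equality leaves $M$ positive semidefinite with a one-dimensional radical.

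It then remains to tie the signature of $h$ to the finiteness of $Q_{n,p}$. In the positive-definite range $Q_{n,p}$ is a finite unitary reflection group from the Shephard--Todd list, namely the ``generalized simplex'' family: $S_n$ when $p=2$, the rank-two groups $G_4,G_8,G_{16}$, and the higher groups $G_{25},G_{32}$ in the finitely many remaining spherical cases $(p,n)\in\{(3,3),(3,4),(3,5),(4,3),(5,3)\}$, each of finite known order. Conversely, when $\tfrac1p+\tfrac1n\le\tfrac12$ the form $h$ is degenerate or indefinite: in the boundary case the radical is one-dimensional and $Q_{n,p}$ surjects onto an infinite Euclidean reflection group carrying a rank-$(n-2)$ translation lattice, while in the strict case $h$ is indefinite and the reflections generate a subgroup of the noncompact group $U(h)$ with unbounded orbits; since a finite group is bounded, $Q_{n,p}$ is infinite in both cases. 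This yields the equivalence ``finite $\iff\tfrac1p+\tfrac1n>\tfrac12$.''

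Finally, for $n=3$ the inequality reads $\tfrac1p>\tfrac16$, i.e. $p\le5$, and I would identify the four groups through the center. Writing $x=\sigma_1\sigma_2\sigma_1$ and $y=\sigma_1\sigma_2$ one has $\B_3=\lan x,y\mid x^2=y^3\ra$ with $Z(\B_3)=\lan x^2\ra$ and $\B_3/Z(\B_3)\cong\PSL(2,\Z)\cong\Z_2*\Z_3$; since $\sigma_1=y^{-1}x$, imposing $\sigma_1^p=1$ collapses $\PSL(2,\Z)$ onto the von Dyck group $\Delta(2,3,p)$, finite of order $6,12,24,60$ for $p=2,3,4,5$. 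Thus $Q_{3,p}$ is a central extension $1\to\bar Z\to Q_{3,p}\to\Delta(2,3,p)\to1$, and comparing with the Shephard-group orders $6,24,96,600$ shows $|\bar Z|=1,2,4,10$; this identifies $Q_{3,2}\cong S_3$, $Q_{3,3}\cong\SL(2,3)$ (the binary tetrahedral group $G_4$), and $Q_{3,5}\cong\SL(2,5)\times\Z_5$ (the group $G_{16}$, with the $\Z_5$ splitting off as the central factor), while $Q_{3,4}\cong G_8$ is the claimed extension $H$ of $S_4$ by $\Z_4$. The steps I expect to be hardest are precisely those that do not reduce to the tridiagonal computation: proving the reverse implication ``positive definite $\Rightarrow$ finite'' intrinsically rather than by quoting the Shephard--Todd classification, and verifying the fine structure of the $n=3$ extensions---that $H$ is \emph{non-split} and that the $p=5$ extension is a genuine direct product---both of which hinge on tracking the order and position of the image of $Z(\B_3)$.
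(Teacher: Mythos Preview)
The paper does not prove this proposition: it is stated as a classical result of Coxeter and attributed to \cite{Cox} without further argument. So there is no ``paper's own proof'' to compare against; everything you have written goes well beyond what the authors supply.

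That said, your sketch is essentially a reconstruction of Coxeter's original 1957 argument (combined with the later Shephard--Todd identification of the finite cases), and it is sound. The key steps --- realizing $Q_{n,p}$ as a rank-$(n-1)$ complex reflection group, computing the tridiagonal Gram matrix with off-diagonal modulus $\tfrac{1}{2\sin(\pi/p)}$, reading off the eigenvalues $1-2t\cos(k\pi/n)$, and translating positive definiteness into the inequality $\tfrac{1}{p}+\tfrac{1}{n}>\tfrac{1}{2}$ --- are all correct. Your treatment of the infinite direction is clean: once the form is degenerate or indefinite the image under the reflection representation is already infinite, so $Q_{n,p}$ is. You are also right to flag the genuine subtlety in the finite direction: positive definiteness only tells you the image lies in a compact unitary group, and passing from ``image finite'' to ``$Q_{n,p}$ finite'' requires knowing the reflection representation is faithful on the abstract Shephard group. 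Coxeter handled this by exhibiting explicit finite groups with the given presentation and matching orders; quoting the Shephard--Todd list (your $G_4$, $G_8$, $G_{16}$, $G_{25}$, $G_{32}$) is the modern shortcut, and is perfectly acceptable here.

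Your $n=3$ analysis via $\B_3/Z(\B_3)\cong\PSL(2,\Z)\twoheadrightarrow\Delta(2,3,p)$ and the order comparison $|G_4|=24$, $|G_8|=96$, $|G_{16}|=600$ is correct and gives the four identifications cleanly. The residual points you single out --- that $H=G_8$ is a \emph{non-split} extension of $S_4$ by $\Z_4$, and that the $\Z_5$ factor in $G_{16}$ genuinely splits off --- are exactly the places where one has to look inside the groups rather than just count, and you have located them accurately.
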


We have the following immediate consequence:
\begin{corollary}\label{coxcor}
 If $\mS\subset\chi\{\zeta_p^i:i=1...p\}$ where $\chi$ is any root of unity and $\zeta_p$ is a primitive
$p$th root of unity with $2\leq p\leq 5$ then $|G|$ is finite.
\end{corollary}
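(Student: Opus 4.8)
The plan is to exhibit $\rho$, up to twisting by a one–dimensional representation, as a representation of one of the finite groups appearing in Proposition \ref{coxprop}.

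First I would strip off the scalar $\chi$. Let $\psi\colon\B_3\to\C^{\times}$ be the one–dimensional representation with $\psi(\sigma_1)=\psi(\sigma_2)=\chi$; this is well defined because the braid relation $\sigma_1\sigma_2\sigma_1=\sigma_2\sigma_1\sigma_2$ is homogeneous. Set $\rho':=\rho\ot\psi^{-1}$, so that $\rho'(\sigma_i)=\chi^{-1}\rho(\sigma_i)$. Then $\rho'$ is again a $d$–dimensional representation of $\B_3$, and the spectrum of $\rho'(\sigma_1)=\chi^{-1}A$ lies in $\{\zeta_p^i:1\le i\le p\}$, the group $\mu_p$ of $p$th roots of unity. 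Since $\mS$, and hence the spectrum of $\chi^{-1}A$, consists of distinct roots of unity (if it did not, Theorem \ref{maintheorem}(a) would already give $G$ infinite and there would be nothing to prove), $\chi^{-1}A$ is diagonalizable, and therefore $(\chi^{-1}A)^p=\1$.

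Next I would push this through Coxeter's theorem. The relation $(\chi^{-1}A)^p=\1$ says exactly that $\sigma_1^p\in\ker\rho'$; as $\ker\rho'$ is normal (and as $\sigma_1$ is conjugate to $\sigma_2$, so $\sigma_2^p$ dies as well), $\rho'$ factors through $\B_3/\lan\sigma_1^p\ra$. Since $2\le p\le 5$, Proposition \ref{coxprop} identifies this quotient with one of the finite groups listed in \eqref{qgps}; in particular $\rho'(\B_3)$ is finite. Finally I would undo the twist: writing $e(w)\in\Z$ for the exponent sum of a word $w$ in $\sigma_1,\sigma_2$, every element of $G=\rho(\B_3)$ has the form $\rho(w)=\chi^{e(w)}\rho'(w)$, so $G$ is contained in the subgroup of $\GL(V)$ generated by the finite central subgroup $\lan\chi\1\ra$ (finite since $\chi$ is a root of unity) together with the finite group $\rho'(\B_3)$. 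Hence $|G|\le \mathrm{ord}(\chi)\cdot|\rho'(\B_3)|<\infty$.

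There is essentially no obstacle here—this is flagged as an immediate consequence—and the only point requiring a moment's care is the identity $(\chi^{-1}A)^p=\1$, i.e. the semisimplicity of $\chi^{-1}A$; this is exactly where one uses that the elements of $\mS$ are distinct roots of unity all lying in a single coset of $\mu_p$.
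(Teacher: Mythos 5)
Your proof is correct and is essentially the paper's argument written out in full: the paper's one-line proof ("$G$ is contained in a central extension of a quotient of one of the finite groups in (\ref{qgps}) by the finite cyclic group generated by $\chi$") is exactly your twist-by-$\psi$, apply Coxeter, untwist procedure. The only stylistic quibble is your parenthetical about repeated eigenvalues: in that case the corollary's conclusion would actually fail (not be vacuous), so the honest reading is that the statement implicitly assumes $\mS$ consists of $d$ distinct values, which is how it is used in the paper.
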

\begin{proof}
 The given hypotheses imply that $G$ is contained in a central extension of a quotient of one of the finite groups in (\ref{qgps}) by the finite cyclic group generated by $\chi$.
\end{proof}

Irreducible finite linear groups naturally break up into two distinct classes: primitive and imprimitive groups.

\begin{defn}
A linear group $\Gamma\subset\GL(V)$ is \textbf{imprimitive} if $V$ is irreducible
and can be expressed as a direct sum of
subspaces $V_i$ which $\Gamma$ permutes nontrivially.  Otherwise, we say that $\Gamma$ is \textbf{primitive}.
\end{defn}

The following observation can be found in \cite{LRW}:
\begin{lemma}\label{nocyclem}
 Suppose $G$ is an irreducible imprimitive finite linear group of dimension $d$.  Then $\mS$ contains a $\C$-coset of the group $C_r=\{\zeta_r^i: 0\leq i\leq r-1\}$ of roots of unity of degree
$r\leq d$.
\end{lemma}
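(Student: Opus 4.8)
The plan is to work directly with the block structure. Since $V$ is irreducible, the blocks $V_1,\dots,V_m$ witnessing imprimitivity ($m\geq 2$) form a single $G$-orbit: the span of any union of $G$-orbits of blocks would be a proper $G$-invariant subspace. Transitivity forces all the $V_i$ to share a common dimension $d/m$, so in particular $m\mid d$ and $m\leq d$. Write $\bar\rho\colon G\to S_m$ for the induced permutation action on $\{V_1,\dots,V_m\}$.

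Next I would reduce to studying a single generator that moves the blocks. Recall that $A=\rho(\sigma_1)$ and $B=\rho(\sigma_2)$ are conjugate in $G$ (indeed $\sigma_2=(\sigma_1\sigma_2)\sigma_1(\sigma_1\sigma_2)^{-1}$ by the braid relation), so $\Spec(A)=\Spec(B)=\mS$. The permutations $\bar\rho(A)$ and $\bar\rho(B)$ cannot both be trivial, since otherwise $G=\langle A,B\rangle$ would fix every $V_i$, contradicting irreducibility. Hence at least one of $A,B$ --- call it $C$ --- acts on the blocks with a nontrivial cycle, of some length $r$ with $2\leq r\leq m\leq d$, say sending $V_{i_1}\mapsto V_{i_2}\mapsto\cdots\mapsto V_{i_r}\mapsto V_{i_1}$. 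Put $W=V_{i_1}\oplus\cdots\oplus V_{i_r}$, a $C$-invariant subspace on which $C$ acts as a ``block cycle'': $C$ carries each $V_{i_j}$ isomorphically onto $V_{i_{j+1}}$ (indices mod $r$), and consequently $C^r$ preserves $V_{i_1}$ and restricts there to an invertible operator $P$ (invertible because $C\in\GL(V)$).

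Finally I would extract the coset. Choose an eigenvalue $\mu\neq 0$ of $P$ with eigenvector $0\neq v\in V_{i_1}$ and let $\nu$ be any $r$-th root of $\mu$. A direct computation --- the only genuine calculation in the proof --- shows that for every $r$-th root of unity $\zeta$ the vector $w_\zeta:=\sum_{j=0}^{r-1}(\zeta\nu)^{-j}C^jv$ satisfies $Cw_\zeta=\zeta\nu\,w_\zeta$, using $\nu^r=\mu$ to fold the top term back around; and $w_\zeta\neq 0$ since its $V_{i_1}$-component is $v$ while the other summands $C^jv$ lie in the distinct blocks $V_{i_{j+1}}$, so no cancellation occurs. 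As eigenvectors attached to the pairwise distinct eigenvalues $\zeta\nu$ ($\zeta^r=1$), these are linearly independent, so $\mS=\Spec(C)$ contains the full $\C$-coset $\nu C_r=\{\nu\zeta:\zeta^r=1\}$ with $r\leq d$, as desired.

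I do not anticipate a real obstacle: the work is bookkeeping for the block permutation together with the elementary eigenvector identity above. The two points that need a little care are ensuring the chosen generator $C$ genuinely moves the blocks --- which is why one uses the conjugacy of $A$ and $B$ to transfer information about $\Spec(B)$ back to $\mS$ --- and that $\mu\neq 0$, which is automatic since $C$, hence $C^r|_{V_{i_1}}$, is invertible.
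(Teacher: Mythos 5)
Your proof is correct and complete. Note that the paper does not actually prove this lemma --- it is stated as ``an observation found in \cite{LRW}'' --- so there is no in-paper argument to compare against; your block-cycle computation is the standard one (and is essentially what the cited source does): an element inducing an $r$-cycle on the blocks has, on the corresponding $C$-invariant sum, eigenvectors $w_\zeta=\sum_j(\zeta\nu)^{-j}C^jv$ with eigenvalues running over a full coset $\nu C_r$. Two things you handled well and that genuinely need saying: the lemma's ``$\mS$'' only makes sense because in context $G=\langle A,B\rangle$ with $\Spec(A)=\Spec(B)$, and one must argue that at least one of $A,B$ moves the blocks --- your use of the conjugacy $\sigma_2=(\sigma_1\sigma_2)\sigma_1(\sigma_1\sigma_2)^{-1}$ covers both points. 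A minor observation: your argument never uses finiteness of $G$ (invertibility of $C^r|_{V_{i_1}}$ already follows from $C\in\GL(V)$), so you have in fact proved a slightly more general statement than the one asserted.
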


The imprimitive linear groups that appear as images of irreducible representations of $\B_3$ will be analyzed in Section \ref{imp}, and the primitive cases will be covered in Section \ref{prim}.

\section{Imprimitive Groups}\label{imp}

For $2\leq d\leq 5$, there are only two ways that a group $\Gamma\subset \GL(V)$ can be imprimitive:
\begin{enumerate}
\item[Case (1)] $d=4$ and $V=V_1\oplus V_2$ with $\dim(V_i)=2$ and $\Gamma$ permutes $V_1$ and $V_2$ non-trivially and this block structure has no
refinement to $1\times 1$ blocks or
\item[Case (2)] $\Gamma$ is a isomorphic to a monomial group, i.e. a subgroup of $M(d):=S_d\ltimes D(d)$ where $D(d)$ is the group of $d\times d$ diagonal matrices and $S_d$ is identified with the $d\times d$ permutation matrices, and acts of $D(d)$ by permuting the entries.
\end{enumerate}

Case (2) is covered by the following:
\begin{theorem}\label{monom}  Let $2\leq d\leq 5$, and assume $\mS$ consists of roots of unity.  If $G$ is an irreducible imprimitive monomial group then $G$ is finite and $\mS$ is of the form:
\begin{enumerate}
 \item[(a)] $\mS=\chi\{\zeta_d^i: 1\leq i\leq d\}$,
\item[(b)] $\mS=\{\alpha,\pm\chi\}$ or
\item[(c)] $\mS=\{\alpha\}\cup\chi\{1,\zeta_3,\zeta_3^2\}$.
\end{enumerate}
\end{theorem}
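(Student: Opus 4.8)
The plan is to reduce to the permutation quotient. After conjugating $\rho$ we may assume $G\subseteq M(d)$; let $\overline{\rho}\colon\B_3\to S_d$ be $\rho$ followed by the projection $M(d)\to S_d$, write $\pi_\sigma=\overline{\rho}(\sigma)$, and set $\overline{G}=\overline{\rho}(\B_3)=\lan\pi_{\sigma_1},\pi_{\sigma_2}\ra$. Two facts are immediate: (i) irreducibility of $G$ forces $\overline{G}$ transitive on $\{1,\dots,d\}$, since the span of the coordinate lines indexed by any $\overline{G}$-orbit is $G$-invariant; (ii) because $\sigma_1,\sigma_2$ are conjugate in $\B_3$ (e.g.\ $\sigma_2=(\sigma_1\sigma_2)\sigma_1(\sigma_1\sigma_2)^{-1}$), the permutations $\pi_{\sigma_1},\pi_{\sigma_2}$ are conjugate in $\overline{G}$ and satisfy $\pi_{\sigma_1}\pi_{\sigma_2}\pi_{\sigma_1}=\pi_{\sigma_2}\pi_{\sigma_1}\pi_{\sigma_2}$. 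The observation that drives the proof is that $\mS=\Spec(A)$ is determined by the cycle type of $\pi:=\pi_{\sigma_1}$: if $(i_1\cdots i_\ell)$ is a cycle of $\pi$, then $\mathrm{span}(e_{i_1},\dots,e_{i_\ell})$ is $A$-invariant, $A$ acts on it as a weighted cyclic shift, and its $\ell$-th power is scalar, so the eigenvalues of $A$ contributed by that cycle form one coset of the group $C_\ell$ of $\ell$-th roots of unity. Since these eigenvalues are distinct (Corollary~2.2 of \cite{TbWz}) and roots of unity by hypothesis, $\mS$ is the disjoint union, over the cycles of $\pi$, of cosets of the corresponding $C_\ell$. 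So it is enough to determine the possible cycle types of $\pi$.

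The core of the proof is then a short census of the transitive subgroups $\overline{G}\le S_d$, $2\le d\le 5$, that can occur. If $\overline{G}$ is abelian, it is cyclic (being generated by the conjugate, hence equal, elements $\pi_{\sigma_1},\pi_{\sigma_2}$), and a transitive cyclic subgroup of $S_d$ is generated by a single $d$-cycle; so $\pi$ is a $d$-cycle and $\mS=\chi\{\zeta_d^i:1\le i\le d\}$, which is case (a). Now suppose $\overline{G}$ is non-abelian. Since $\overline{G}$ is a quotient of $\B_3$ and $\B_3^{\mathrm{ab}}\cong\Z$, its abelianization is cyclic, which excludes $D_4$. Each of the remaining non-abelian transitive subgroups of degree $\le 5$ — $S_3$, $A_4$, $S_4$, $D_5$, $F_{20}$ (the Frobenius group of order $20$), $A_5$, $S_5$ — has trivial center, so the image of $Z(\B_3)$ is trivial and $\overline{G}$ is a quotient of $\B_3/Z(\B_3)\cong\PSL(2,\Z)\cong\Z_2*\Z_3$; hence $\overline{G}$ is generated by an element of order dividing $2$ together with one of order dividing $3$. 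This rules out $D_5$ and $F_{20}$, which have no element of order $3$, and also $S_5$: an order-$3$ element of $S_5$ is a $3$-cycle, an odd involution of $S_5$ is a transposition, a transposition and a $3$-cycle generate an intransitive (hence proper) subgroup of $S_5$, and two even permutations generate a subgroup of $A_5$. So $\overline{G}\in\{S_3,A_4,S_4,A_5\}$.

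Finally I would identify $\pi$ in each of these four cases. If $\overline{G}=S_3$ then $\pi$ is a transposition, since two conjugate $3$-cycles in $S_3$ generate only $C_3$. For the other three, Proposition~\ref{coxprop} applies: as $\overline{G}$ has no element of order $>5$, it is a quotient of one of $S_3,\SL(2,3),H,\SL(2,5)\times\Z_5$, of orders $6,24,96,600$. Thus if $\overline{G}=A_4$ then $o(\pi)\ne 2$ (since $|A_4|=12>6$), so $o(\pi)=3$ and $\pi$ is a $3$-cycle; if $\overline{G}=S_4$ then $o(\pi)\ne 2$ and $o(\pi)\ne 3$ (equality of orders would force the surjection $\SL(2,3)\to S_4$ to be an isomorphism, impossible since $S_4$ is centerless), so $o(\pi)=4$ and $\pi$ is a $4$-cycle; if $\overline{G}=A_5$ then $o(\pi)\notin\{2,3\}$ (since $|A_5|=60>24$), so $o(\pi)=5$ and $\pi$ is a $5$-cycle. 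Reading off the coset description from the first paragraph, $\mS$ is $\{\alpha,\pm\chi\}$, $\{\alpha\}\cup\chi\{1,\zeta_3,\zeta_3^2\}$, $\chi\{\zeta_4^i\}$, or $\chi\{\zeta_5^i\}$, i.e.\ cases (b), (c), (a), (a); in every case $k:=o(\pi)\le 5$.

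It remains to prove $|G|<\infty$. Since $\pi^{k}=1$, the matrix $A^{k}$ is diagonal, and its diagonal entries are the $k$-th powers of the eigenvalues of $A$, hence roots of unity. Conjugating a diagonal matrix by a monomial matrix only permutes its diagonal entries, so every $G$-conjugate of $A^{k}$ is diagonal with entries in the finite cyclic group generated by those roots of unity; hence the normal subgroup $N$ of $G$ generated by $A^{k}$ is contained in that finite group of diagonal matrices and is therefore finite. On the other hand $N$ is the $\rho$-image of the normal subgroup of $\B_3$ generated by $\sigma_1^{k}$, so $G/N$ is a quotient of the quotient of $\B_3$ by that subgroup, which is finite by Proposition~\ref{coxprop} since $k\le 5$. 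Therefore $|G|=|N|\cdot|G/N|<\infty$. The one genuinely delicate part is the census — in particular the exclusion of $D_5$, $F_{20}$, $S_5$ and the determination of $o(\pi)$ for $A_4$, $S_4$, $A_5$; the rest is routine.
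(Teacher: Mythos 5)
Your proof is correct, but it reaches the classification by a genuinely different route than the paper. Where the paper invokes Lemma \ref{nocyclem} and then constrains the permutation parts by brute force --- fixing the cycle type of $P_1$, using the braid relation to enumerate the possible $P_2$ (Table \ref{mapletable}), and checking intransitivity case by case --- you instead observe that each $\ell$-cycle of $\pi_{\sigma_1}$ contributes a full coset of $C_\ell$ to $\mS$ (which recovers the content of Lemma \ref{nocyclem} for free) and then pin down the cycle type of $\pi_{\sigma_1}$ by classifying the possible transitive quotients $\overline{G}\le S_d$ using structural facts about $\B_3$: cyclic abelianization kills $D_4$ and non-cyclic abelian groups, triviality of the center of the candidates reduces to quotients of $\Z_2*\Z_3$ (killing $D_5$, $F_{20}$, $S_5$), and Coxeter's orders force $o(\pi_{\sigma_1})$ in the remaining cases $S_3$, $A_4$, $S_4$, $A_5$. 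Your finiteness argument is also different and more uniform: rather than appealing to the TW classification and explicit matrices in cases (b) and (c), you show directly that the normal closure of $A^{k}$ in $G$ is a finite group of diagonal matrices and that $G$ modulo this subgroup is a quotient of one of Coxeter's finite groups, so $G$ is finite-by-finite. This buys a cleaner logical structure (the paper's reduction via Lemma \ref{nocyclem} presupposes finiteness of $G$, whereas your argument derives it), at the cost of needing the list of transitive subgroups of $S_d$ for $d\le 5$ and the presentation $\B_3/Z(\B_3)\cong\Z_2*\Z_3$; the paper's Table-\ref{mapletable} computation is more elementary but less illuminating. I checked the delicate points you flagged --- the exclusions of $D_5$, $F_{20}$, $S_5$ and the determination of $o(\pi)$ for $A_4$, $S_4$, $A_5$ (including the order-count against $|S_3|=6$ and $|\SL(2,3)|=24$ and the center argument ruling out $\SL(2,3)\twoheadrightarrow S_4$) --- and they are all sound.
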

\begin{proof}
In each of the cases (a)-(c) one checks that the image of $G$ is indeed finite,
in case (a) it follows from Proposition \ref{coxcor}, while in cases (b) and (c) one employs the TW classification (in case (c) one has two cases to check depending on the sign of $\det(A)^{1/2}$).  By Lemma \ref{nocyclem} we need only
show that if $\mS$ contains a coset of roots of unity but is not in case (a)-(c) the corresponding representation is not irreducible.  Since $G$ is monomial, we may assume that its generators $A$ and $B$ are generalized permutation matrices, that is, of the form $A=D_1P_1$, $B=D_2P_2$ where $D_i$ is a diagonal matrix and $P_i$ is a permutation matrix.  It is clear that $G$ is reducible if the action of $g:=\lan P_1, P_2\ra$ on the standard basis for $\C^d$ is intransitive, since the span of any orbit will be an invariant subspace for $G$.  For the excluded
cases we may assume either $d=4$ and $P_1$ is $(1,2)$ or $(1,2)(3,4)$,
or $d=5$ and $P_1$ is $(1,2)$, $(1,2,3)$, $(1,2)(3,4)$, $(1,2,3)(4,5)$ or $(1,2,3,4)$.  One then uses the braid relation to determine
the possible cycle forms of $P_2$.  We record them in Table \ref{mapletable} in disjoint cycle notation.
In each case it is clear that $P_1$ and $P_2$ generate an intransitive subgroup
of $S_d$.
\end{proof}

\begin{table}\caption{Intransitive Cases}\label{mapletable}
\begin{tabular}{*{3}{|l}|}
\hline
$d$ & $P_1$ & $P_2$  \\
\hline\hline
4& $(1,2)$ & $(1,i)$ or $(2,j)$\\
\hline
4 & $(1,2)(3,4)$ & $P_1$\\
\hline
5 & $(1,2)$ & $(1,i)$ or $(2,j)$\\
\hline
5 & $(1,2)(3,4)$ & $(1,2)(j,k)$ or $(3,4)(r,s)$\\
\hline
5 & $(1,2,3)$ & $P_1$, $(1,j,2)$, $(2,j,3)$ or $(1,3,j)$ $j=4$ or $5$\\
\hline
5 & $(1,2,3)(4,5)$ & $P_1$ \\
\hline
5& $(1,2,3,4)$ & $(i,j,k,\ell)\in S_4$\\
\hline
\end{tabular}
\end{table}

Now consider Case (1), so that $A$ and $B$ permute two dimension $2$ vector spaces $V_1$ and $V_2$.  Since $G$ is assumed to be irreducible, by choosing an ordered basis consisting of the union of bases of $V_1$ and $V_2$, we may assume either $A$ or $B$ is block skew-diagonal with blocks of size $2$.  Otherwise both $A$ and $B$ would be block diagonal with respect to this basis, violating irreducibility.  Now the characteristic polynomial of such a block skew-diagonal matrix is a polynomial in $x^2$, so that eigenvalues occur in pairs $\pm r$ and $\pm s$.  If $V_1\oplus V_2$ has a refinement to $1\times 1$ blocks then $\mS=\chi\{\pm 1,\pm i\}$ and is covered by Theorem \ref{monom}.
 Thus we may assume $\mS=\{\pm r,\pm s\}$ with $r/s$ not a $4$th root of unity.    For each pair $(r,s)$ there are
two inequivalent irreducible $4$-dimensional representations of $\B_3$ with $\mS=\{\pm r,\pm s\}$ depending on a choice of $D=\pm\sqrt{\la_2\la_3/(\la_1\la_4)}=\pm 1$ (see \cite[Prop. 2.6]{TbWz}).  Setting $$p_1(u,t):=u^2t^4+(u+u^2+u^3)t^3+(1+2u+2u^2+2u^3+u^4)t^2+(u+u^2+u^3)t+u^2$$ we have the following:
\begin{theorem}
Suppose $G$ is imprimitive and irreducible with unrefinable blocks of size 2 so that $\mS=\{\pm r,\pm s\}$ with $u=r/s$ not a $4$th root of unity.  Denote the order of $u$ by $o(u)$.  Then:
\begin{enumerate}
\item[(a)] The characteristic polynomial of $AB^{-1}$ is either $p_1(u,t)/u^2$
 or
$p_1(-u,t)/u^2$ where $D=-1$ corresponds to $p_1(u,t)/u^2$ and $D=1$ corresponds
to $p_1(-u,t)/u^2$.
\item[(b)] If $o(u)\not\in\{3,5,6,10\}$, then $G$ is \textbf{infinite}.
\item[(c)] If $(D,o(u))\in\{(1,5),(-1,10)\}$, $G$ is \textbf{infinite}.
\item[(d)] If $o(u)\in\{3,6\}$ $G$ is \textbf{finite}.
\item[(e)] If $(D,o(u))\in\{(-1,5),(1,10)\}$ $G$ is \textbf{finite}.

\end{enumerate}
\end{theorem}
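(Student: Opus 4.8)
The plan is to compute the characteristic polynomial of $C:=AB^{-1}$ from the Tuba--Wenzl normal form and then to decide finiteness according to whether its roots can be roots of unity. For Part~(a) I would order the eigenvalues so that $\la_2\la_3=\la_1\la_4$ (equivalently $D=\pm1$, as in the statement), which is the form in which the $4$-dimensional Case~(1) representation appears, and use the closed forms for $A$ and $B$ from \cite[Prop.~2.6]{TbWz} in terms of $r$, $s$ and $D$ (the choice $\gamma^2=\det(A)^{1/2}=\pm rs$). Substituting these, forming $C$, computing $\det(tI-C)$ and simplifying using $\gamma^2=\pm rs$ yields $p_1(u,t)/u^2$ for one value of $D$ and $p_1(-u,t)/u^2$ for the other, where $u=r/s$; the computation is routine and easily checked by machine, the only care being the sign bookkeeping that pairs $D$ with the two polynomials.

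For (b)--(e) I would first record two facts. Writing $P(t)=p_1(\ve u,t)/u^2$ with $\ve=\pm1$, one checks $P(t)=t^4+at^3+(a^2-1)t^2+at+1$ with $a=1+\ve(u+u^{-1})$, so $P$ is reciprocal, and dividing by $t^2$ and putting $w=t+t^{-1}$ gives $P(t)=(t^2-w_1t+1)(t^2-w_2t+1)$, where $w_1,w_2$ are the roots of $W(w):=w^2+aw+(a^2-3)$; by Part~(a), $a=1-D(u+u^{-1})$. Secondly: if $G$ is finite then $C$ has finite order, so every eigenvalue $t$ of $C$ is a root of unity and hence each $w_i=t_i+t_i^{-1}$ is \emph{totally real} (every $\Q$-conjugate of $t_i$ is again a root of unity). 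Now for (b) and (c), with $n=o(u)$, the $\Q$-conjugates of $a$ are $a_k=1-2D\cos(2\pi k/n)$ with $\gcd(k,n)=1$, and the corresponding conjugates of $w_1,w_2$ are $\tfrac12(-a_k\pm\sqrt{12-3a_k^{2}})$; if $|a_k|>2$ for some admissible $k$ these are non-real, contradicting total reality, so $C$ has a non-root-of-unity eigenvalue and $G$ is infinite. It then suffices to check that such a $k$ exists for every $(D,n)$ in (b) or (c): for $D=-1$ take $k=1$ (valid once $n\ge7$, since $\cos(2\pi/n)>\tfrac12$); for $D=1$ one needs $k$ coprime to $n$ with $n/3<k<2n/3$, and a short case-check ($k=(n-1)/2$ for odd $n$; small even $n$ by hand; the tail $n\ge13$ because $(n/3,2n/3)$ then contains an integer prime to $n$) gives such a $k$ for all $n\ge3$ except $n\in\{3,6,10\}$. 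The pairs with no admissible $k$ are thus exactly $(\pm1,3),(\pm1,6),(-1,5),(1,10)$ --- precisely the cases of (d) and (e) --- so (b) and (c) follow.

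For (d) and (e) finiteness is not visible from eigenvalues: here $po(A)=\mathrm{lcm}(2,o(u))\in\{6,10\}$, so $\rho$ factors only through $\B_3/\langle\langle\sigma_1^{po(A)}\rangle\rangle$, which is \emph{infinite} by Proposition~\ref{coxprop}, and $G$ is a genuine finite quotient of an infinite group. I would therefore identify $G$ directly. Replacing $(r,s)$ by $(\mu r,\mu s)$ for a root of unity $\mu$ tensors $\rho$ by a $1$-dimensional character and leaves the image of $\rho$ in $\PGL(V)$ unchanged; so up to a finite central factor $G$ depends only on $D$ and $u$, and we may take $r=1$, $\mS=\{1,-1,u^{-1},-u^{-1}\}$ with $u$ a primitive $3$rd, $5$th, $6$th or $10$th root of unity. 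For each of these finitely many cases one writes $A,B$ explicitly via the Tuba--Wenzl normal form (using the value of $\gamma^2$, equivalently of $D$, for which the representation is irreducible --- when $o(u)\in\{3,6\}$ only one choice satisfies the existence criterion of Theorem~\ref{tw}) and lets GAP verify that $\langle A,B\rangle$ is finite, recording its order; consistently, the reduction above already forces $C$ to have order $3$ or $12$ when $o(u)\in\{3,6\}$ and order $30$ when $(D,o(u))\in\{(-1,5),(1,10)\}$.

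The main obstacle is precisely this last step: no character- or eigenvalue-theoretic argument detects finiteness for the pairs in (d) and (e), so one is forced to control the group $\langle A,B\rangle$ itself --- hence the reliance on GAP. A secondary nuisance is the elementary but finicky verification in (b)--(c) of exactly which $(D,o(u))$ admit a Galois conjugate $a_k$ with $|a_k|>2$, in particular at the boundary values $o(u)\in\{3,6\}$ and in the tail $o(u)\ge13$.
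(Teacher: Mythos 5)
Your argument is correct, and for parts (a)--(c) it is essentially the paper's own proof: the paper likewise normalizes the matrices (dividing by $s$), computes the characteristic polynomial of $AB^{-1}$ directly, substitutes $x=t+1/t$, $y=u+1/u$ to reduce to the quadratic $x^2+(y\pm1)x+(y^2-2\pm2y)$, and shows its discriminant is negative for a suitable Galois conjugate of $u$, contradicting $t+1/t\in\R$; your version, which surveys \emph{all} conjugates $a_k$ and isolates exactly the pairs $(D,o(u))$ admitting no conjugate with $|a_k|>2$, is the same idea organized slightly more systematically (the paper instead picks the single conjugate nearest $-1$). Where you genuinely differ is in (d) and (e). The paper does not hand the explicit matrix groups to a finiteness test: for (d) it observes that $A$ and $AB^{-1}$ both have projective order $6$, and that these relations together with the braid relation force the projective image to be a quotient of a concrete group of order $648$; for (e) it takes the normal closure $H$ of $\{A^5,B^5\}$, which has finite index by Coxeter's theorem, lists its four generators $A^5$, $B^5$, $AB^5A^{-1}$, $BA^5B^{-1}$, and computes them explicitly to be block matrices generating a copy of $S_3$, whence $G$ is finite-by-finite. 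Your alternative --- normalize $s=1$, note that rescaling only changes $G$ by a finite central factor, and run GAP's finiteness algorithm on the finitely many explicit cyclotomic matrix groups --- is legitimate (finiteness of finitely generated matrix groups over cyclotomic fields is decidable and implemented), and your observation that only one choice of $\gamma^2$ survives the irreducibility criterion when $o(u)\in\{3,6\}$ checks out. The trade-off is that the paper's structural arguments identify the groups (and in (e) exhibit an explicit finite normal subgroup of finite index), while your approach is more uniform but treats the decisive step as a black box.
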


\begin{proof} We may replace $A$ by $\frac{1}{s}A$ and $B$ by $\frac{1}{s}B$
without changing the finiteness of $G$ or the characteristic polynomial of $AB^{-1}$, and doing so we obtain the following matrices from \cite{TbWz}:
$$A=\begin{pmatrix}1&-({D}^{-2}+D^{-1}+1)&
\left( {D}^{-2}+D^{-1}+1 \right) u&-u\\0&-1&
\left( D^{-1}+1 \right) u&-u\\0&0&u&-u
\\0&0&0&-u\end {pmatrix}$$
and
$$B=\begin{pmatrix} -u&0&0&0\\-u&u&0&0
\\-D&D+1&-1&0\\-{D}^{3}&{D}^{3}+{D
}^{2}+D&-{D}^{2}-D-1&1\end {pmatrix}$$
and (a) follows by computation.
If the eigenvalues of $AB^{-1}$ are not roots of unity then $G$ is infinite.
Let us assume that $D=-1$ and the roots $t$ of $p_1$ (i.e. the eigenvalues of $AB^{-1}$) are roots of unity.  Set $x=t+1/t$ and $y=u+1/u$ and consider
$p_1(u,t)/(ut)^2=x^2+(y+1)x+(y^2-2+2y)$.  By applying a Galois automorphism of the field $\Q[u,t]$ we may assume that $u=e^{2\pi \im/\ell}$ for $\ell=o(u)$. Notice that $y=2\Re(u)=2\cos(2\pi/\ell)$.  If we assume $\ell>6$
then $y>1$, and the discriminant of $x^2+(y+1)x+(y^2-2+2y)$ is:
$-3y^2-6y+9<0$ so that $x\not\in\R$, contradicting $x=2\Re(t)$.
Next assume $D=1$.  The argument is essentially the same, except we get
the polynomial $x^2+(1-y)x+(y^2-2-2y)$ with discriminant $9-3y^2+6y$ when we replace $u$ by $-u$ in $p_1$.
Using a Galois automorphism we may assume that $u=e^{2\pi \im k/\ell}$ where $k$ is chosen so that $u$ is nearest $-1$.  Now $y=2\Re(u)<-1$ provided $\ell>4$ and $\ell=\neq 6$ or $10$, the latter exclusion coming from the fact that the primitive $10$th root of unity nearest $-1$ is $e^{6\pi \im/10}$ which has real part $-\cos(2\pi/5)>-1/2$.  Again, one obtains a contradiction since $9-3y^2+6y<0$
contradicting $x\in\R$.  This proves (b) and (c).

To prove (d) we observe that in case $o(u)=3$ or $6$, the projective order of $A$ is $6$, and the element $AB^{-1}$ also has projective order $6$.  These together with the braid relation $ABA=BAB$ are enough to conclude that, modulo the center, $G$ is a quotient of
a group of order $648$.

For (e)  we consider the normal subgroup $H$ generated by $A^5$ and $B^5$.  Observe that $H$ is of finite index by the result of Coxeter above.  A set of generators for $H$ is $\{A^5,B^5,AB^5A^{-1},BA^5B^{-1}\}$.  Set $M:=\begin{pmatrix} 1&-1\\0&-1\end{pmatrix}$, $N:=\begin{pmatrix} -1&0\\-1&1\end{pmatrix}$ and
$T:=\begin{pmatrix} 0&1\\1&0\end{pmatrix}$.  Then if $o(u)=5$ and $D=-1$ we compute $A^5=M\oplus M$, $B^5=N\oplus N$ and $AB^5A^{-1}=BA^5B^{-1}=T\oplus T$.  One can easily see that $M,N$ and $T$ generate
$S_3$, from which it follows that $G$ is finite.  For the case $D=1$ and $o(u)=10$ the matrices involved are slightly more complicated, such as $\begin{pmatrix} 1 & -3\\0&-1\end{pmatrix}$, $\begin{pmatrix} -2 & 3\\-1&2\end{pmatrix}$ and $\begin{pmatrix} 1 & 0\\1&-1\end{pmatrix}$, but give an equivalent representation of $S_3$ and we conclude that $G$ is finite in this case as well.

\end{proof}

This completes the proof of Theorem \ref{maintheorem}(c).

 \section{Primitive Groups}\label{prim}
 In this section we
assume that $G$ is a finite primitive irreducible group.  Then by rescaling $A$ and $B$ by a choice of a root of unity $(\det(A))^{-1/d}$, we may assume that
$G$ is unimodular without changing $po(A)$.   Thus we can determine the possible values of $po(A)$ by computing the projective orders of elements in the groups on Feit's list of finite unimodular primitive irreducible linear groups of degree $5$ or less \cite{Ft}.

\begin{lemma}\label{orders}
Suppose $g$ is an element in a primitive unimodular irreducible finite group $H$ of dimension $2\leq d\leq 5$ and let $t$ be the order of $g$ modulo $Z(H)$.  Then
\begin{enumerate}
\item[(a)] if $d=2$, $1\leq t\leq 5$,
\item[(b)] if $d=3$, $1\leq t\leq 7$,
\item[(c)] if $d=4$, $t\in\{1,\ldots,10,12,15,20,24\}$ and
\item[(d)] if $d=5$, $t\in\{1,\ldots,6,9,\ldots,12\}$.
\end{enumerate}
\end{lemma}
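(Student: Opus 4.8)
The plan is to reduce the statement to a finite computation over Feit's classification of finite unimodular primitive irreducible linear groups of degree at most $5$, which is cited as \cite{Ft}. Since the projective order $po(A)$ of a matrix $A$ in a finite group $G$ equals the order of the image of $A$ in $G/Z(G)$ — indeed $po(A)$ is by definition the least $t$ with $A^t$ scalar, and a scalar in $G$ lies in the center — it suffices to bound, for each group $H$ on Feit's list in dimension $d$, the orders of elements of $H/Z(H)$. More precisely, since $\det$ is a homomorphism to the roots of unity and we have already normalized so that $G$ is unimodular, every element $A$ we care about lies in $\SL(V)$; the scalars in $\SL_d$ are the $d$-th roots of unity, so $Z(H) \subseteq \mu_d$ and $po(A) = \mathrm{ord}(A Z(H))$ exactly. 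So the lemma is the assertion: for each finite unimodular primitive irreducible $H \subseteq \SL_d(\C)$ with $2 \le d \le 5$, the element orders of $H/Z(H)$ lie in the stated sets.

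The key steps, in order, would be: (1) Recall Feit's (equivalently Blichfeldt's for small degrees) explicit list of the finite primitive irreducible subgroups of $\GL_d(\C)$, $d \le 5$, up to scalars — in dimension $2$ these are (central extensions of) the binary tetrahedral, octahedral, and icosahedral groups, i.e. $\mathbb{Z}/2 . A_4$, $\mathbb{Z}/2.S_4$, $\mathbb{Z}/2.A_5$; in dimension $3$ one has the Valentiner group $3.A_6$, $\PSL(2,7)$, $3^2 \rtimes \SL(2,3)$ and the Hessian groups; in dimension $4$ and $5$ the list is longer but still finite and completely explicit, including $\Sp(4,3)$, $\SL(2,5)$, $2.A_6$, $2.A_7$, $\PSL(2,11)$, extraspecial normalizers, etc. (2) For the primitive groups whose structure is ``small'' (normalizers of extraspecial $p$-groups, imprimitively-built-up primitive groups), the element orders modulo center are easy: the central quotient is an extension of an elementary abelian group by a subgroup of $\Sp$ or $\GL$ over a small field, and one reads off element orders directly. (3) For the genuinely sporadic/quasisimple entries — $A_5, S_4, A_6, A_7, \PSL(2,7), \PSL(2,11), \Sp(4,3), \dots$ — one invokes the known character tables / conjugacy-class data (available in the ATLAS, or computed in GAP, exactly as the paper advertises) to list the orders of elements in the simple-or-almost-simple quotient, and checks that, after accounting for the at most order-$d$ center, nothing outside the claimed sets occurs. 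Step (d) for $d=5$ in particular: the relevant groups have central quotients among $A_5, A_6, \PSL(2,11), \U_4(2)\cong\PSp(4,3)$, and the $5^{1+2}$-normalizer; one checks element orders $1,\dots,6,9,10,11,12$ exhaust the possibilities, with the gaps at $7,8$ being the content.

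The main obstacle is not conceptual but bookkeeping: one must be confident that Feit's list is complete and correctly transcribed, and then carry out the element-order computation for each entry — this is precisely where the paper's use of GAP comes in, and I would lean on it rather than doing the $d=4,5$ cases by hand, since the extraspecial-normalizer groups in dimension $4$ (orders like $15, 20, 24$ appearing) and the quasisimple groups in dimension $5$ are the fiddly ones. A secondary subtlety is handling the center correctly: one needs $|Z(H)| \le d$, which follows from unimodularity as noted above, so that the passage from element orders in $H$ to element orders in $H/Z(H)$ only divides by a bounded factor and does not accidentally introduce spurious small orders; conversely one must verify no element order in $H/Z(H)$ is created that fails to lift, but since $po(A)$ is defined directly on $A \in G$ this direction is automatic. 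Once the list is in hand the verification in each fixed dimension is a finite check, so the lemma follows.
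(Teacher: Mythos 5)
Your proposal is correct and follows essentially the same route as the paper: reduce to Feit's list of finite primitive unimodular irreducible linear groups of degree at most $5$, then compute element orders in $H/Z(H)$ for each entry, using GAP for the dimension $4$ and $5$ cases (in particular the extraspecial normalizers and the quasisimple groups). The only additions you make are the explicit justification that projective order equals order modulo the center and the bound $|Z(H)|\le d$, which the paper leaves implicit.
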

\begin{proof}
The list of such  primitive unimodular irreducible finite groups is given in \cite[Section 8.5]{Ft}, and the only work to do is to construct the groups using \cite{GAP} and compute the orders of elements in the quotient $H/Z(H)$.  Modulo their centers, the dimension $2$ groups are  $S_4, A_4$, and $A_5$, and for $d=3$ one has $A_5, A_6, \PSL(2,7)$ and subgroups of the Hessian group of order $216$.  For $d=4$ there are 11 types of groups.  Four of these come from direct products of dimension $2$ linear groups and their extensions by an order $2$ outer automorphism.  One also has the explicit groups $A_5, S_5, A_6, S_6, A_7, \SL(2,5), \SL(2,7)$ and $\Sp(4,3)$.  The last class of dimension $4$ groups are certain subgroups of extraspecial $2$-groups of order $2^5$ by their automorphism groups.
For $d=5$ Feit's list yields the following groups (with trivial centers):
\begin{enumerate}
\item Subgroups of $\SL(2,5)\ltimes(\Z_5\times\Z_5)$: orders $(1,2,3,4,5,6,10)$
\item $A_5$: orders $(1,2,3,5)$
\item $S_5$: orders $(1,2,3,4,5,6)$
\item $A_6$: orders $(1,2,3,4,5)$
\item $S_6$: orders $(1,2,3,4,5,6)$
\item $\PSp(4,3)$: orders $(1,2,3,4,5,6,9,12)$
\item $\PSL(2,11)$: orders $(1,2,3,5,6,11)$
\end{enumerate}

To illustrate how one uses GAP to get the information listed in the theorem, we give the following sample code for determining the possible orders in
the class of $4$ dimensional groups mentioned above.:\\

\noindent\texttt{gap> H:=ExtraspecialGroup(32,"-");\\
<pc group of size 32 with 5 generators>\\
gap> T:=AutomorphismGroup(H);\\
 <group of size 1920 with 2 generators>\\
 gap> M:=SemidirectProduct(T,H);\\
 <permutation group with 7 generators>\\
gap> N:=M/Center(M);\\
<permutation group of size 30720 with 11 generators>\\
gap> s:=Elements(N);;\\
gap> ords:=List(s,Order);;\\
gap> Set(ords);\\
$[$ 1, 2, 3, 4, 5, 6, 8, 12 $]$ }

\end{proof}

Parts (i),(iii) and (iv) of Theorem \ref{maintheorem}(d) are immediate from Lemma \ref{orders}.  We proceed by cases to prove part (ii).
\begin{proof}(of Theorem 2.1(d)(ii)).
For $d=3$, Lemma \ref{orders} implies that $G$ is infinite if $po(A)\geq 8$, so we need only consider $\ell:=po(A)=6$ and $7$.
Let $\mS=\{\la_1,\la_2,\la_3\}$.  Clearly $G$ is finite and irreducible if and only if the group generated by $A/\la_1$ and $B/\la_1$ is finite and irreducible, so we may
assume $\mS=\{1,\theta,\phi\}$.  By applying a Galois automorphism we may further assume that $\theta=e^{2\pi\im/\ell}$ and $\phi=\theta^k$ with $2\leq k\leq \ell-1$.
From \cite[Proposition 2.5]{TbWz} we obtain:
$$A=\begin{pmatrix} 1&\phi/\theta+\theta & \theta\\
            0&\theta & \theta\\
            0&0& \phi\end{pmatrix},
B=\begin{pmatrix} \phi & 0& 0\\
        -\theta & \theta & 0\\
        \theta & -\phi/\theta-\theta & 1
  \end{pmatrix}.$$

If $po(A)=6$, the cases $k=2,5$ correspond to reducible representations (with infinite image), since $\theta\cdot 1+\phi^2=e^{\pi\im/3}+e^{2k\pi\im/3}=0$ for $k=2,5$.  For $k=3,4$ the eigenvalues case are $\pm 1,e^{\pi\im/3}$ and $1,\pm e^{\pi\im/3}$ respectively.  Theorem \ref{monom} suggests these give us imprimitive groups.  Writing down the $A$ and $B$ matrices in monomial form one checks easily that they have no common eigenvector and hence the representation is irreducible and imprimitive. Thus $po(A)=6$ implies $G$ is either reducible or imprimitive.

If $po(A)=7$, the cases $k=2,4$ or $6$ each give infinite groups as can be seen either by checking that the eigenvalues of $AB^{-1}$ are not roots of unity, or by observing that that the only dimension $3$ irreducible imprimitive group with elements of order 7 is $\PSL(2,7)$, which also has elements of orders $1,2,3$ and $4$.  So we need only check that $AB^{-1}$ does not have one of these orders, and for $k=2,4$ or $6$ this computation yields the desired result. For $k=3,5$ one finds that $(AB^{-1})^4=I$ which, together with the braid relation and $A^7=B^7=I$, is enough to show that $G$ is a quotient of a group of order $1176$ and hence finite.
\end{proof}

This completes the proof of Theorem \ref{maintheorem}(d).

\begin{rmk}
We would like to point out the limitations of our approach, in particular why we do not get complete results for dimensions $4$ and $5$ with $G$ primitive.  Firstly, the representations are not uniquely determined by the eigenvalues: for $d=4$ there are two choices, and for $d=5$ there are $5$.  Secondly, the sets of eigenvalues $\mS$ with $po(A)=t$ for some $t$ found in Lemma \ref{orders} can be quite large particularly if $t$ is composite.  Finally, the only technique we have for showing that $G$ is infinite in these cases is to show that some element (such as $AB^{-1}$) has infinite order.  Moreover, if the order happens to be finite, this relation might not be sufficient to conclude that
$G$ is finite, in which case we must resort to further \emph{ad hoc} means.  For $d\geq 6$, there are further issues.  A representation of degree $6$ or higher is not determined by $\mS$ and $\gamma$.  Moreover, the imprimitive cases are significantly more delicate.  Given the eigenvalues of an irreducible representation $\rho$ of $\B_3$ of degree $d=6$ or $7$ one can sometimes use our approach as follows: first verify that $\rho(\B_3)$ is primitive by using Lemma \ref{nocyclem}.  Then use Feit's list \cite{Ft} to determine which $po(A)$ can appear in finite linear groups of degree $d$.  For example, if $d=7$, $\mS$ does not contain a full coset of $C_r\subset\C$, $r\leq 7$ and $p\mid po(A)$ for $13\neq p\geq 11$ a prime, then $\rho(\B_3)$ is infinite.
\end{rmk}

\section{Applications}\label{aps}
We apply our results to ribbon categories obtained as subquotients of $\Rep(U_q\g)$ where $q=e^{\pi\im/\ell}$, see \cite{TurBook} for details.  These categories
will be denoted $\CC(\g,q,\ell)$ as in \cite{Rsurvey}.  In particular the representation of $\B_3$ acting on $\Hom(V,V^{\ot 3})$ is irreducible provided the eigenvalues of $A$ are distinct by \cite[Lemma 3.2]{TbWz}.  We can compute the
eigenvalues by applying results of Reshetikhin found in \cite[Corollary 2.22(3)]{LeducRam}.  Since the cases $d=2$ and $d=3$ were already considered in \cite{FLW2} and \cite{LRW} respectively, we focus on the cases $d=4$ and $5$.
Since the order of $G=\rho(\B_3)$ is invariant under Galois automorphisms, we can safely assume $q=e^{\pi\im/\ell}$ without loss of generality.  The computation involves two steps: first we use standard Lie theory techniques to decompose $V^{\ot 2}$ as a direct sum of simple objects $V_{\la_i}$.  The eigenvalues of the
action of $\rho(\sigma_1)$ are then computed (up to an overall scale factor)
as $\pm q^{\lan \la_i+2\delta,\la_i\ra/2}$ where the sign is positive if $V_{\la_i}$ appears in the symmetrization of $V^{\ot 2}$ and is negative otherwise, and $\delta$ is $1/2$ the sum of the positive weights.

\begin{example} Let $\g=\g_2$ and let $V$ be the simple object labelled by $\la_1=(\ve_1-\ve_3)$, i.e. the highest weight of the $7$ dimensional fundamental representation of $\g_2$.  First suppose that $3\mid\ell$ so that the corresponding category is a unitary modular category (see \cite{rowellJPAA}).  Then if $18\leq\ell$, $\dim\Hom(V,V^{\ot 3})=4$ and the image of $\sigma_1$ has eigenvalues $\{q^{-12},q^2,-q^{-6},-1\}$.  Thus the projective order of the image of $\sigma_1$ is $\ell$ if $\ell$ is even and
$2\ell$ if $\ell$ is odd.  Thus by Theorem \ref{maintheorem}(d)(iii) the image of $\B_3$ is infinite unless or $24$.  Notice however, that for $\ell=24$ we have repeated eigenvalues, which implies the image is infinite by Theorem \ref{maintheorem}(a).  Now suppose that $3\nmid\ell$.  In this case
$\dim\Hom(V,V^{\ot 3})=4$ for $10\leq\ell$.  The eigenvalues of the image of $\sigma_1$ are as above, so that by Theorem \ref{maintheorem}(d)(iii) the image of $\B_3$ is
infinite unless $\ell=10$ or $20$.
\end{example}

\begin{example} Now let $\g=\mathfrak{f}_4$, and $V$ be the simple object in $\CC(\mathfrak{f}_4,q,\ell)$ analogous to the vector representation of $\mathfrak{f}_4$.  If $22\leq\ell$ and even, $\dim\Hom(V,V^{\ot 3})=5$ and the eigenvalues of the image of $\sigma_1$ are $\{q^{-24},q^{-12},q^2,-1,-q^{-6}\}$.  Thus Theorem \ref{maintheorem}(d) implies that the image of $\B_3$ is infinite for $22\leq\ell$ since the projective order of the image of $\sigma_1$ is $\ell$ in these cases.  Notice that in the case $\ell=24$ we have repeated eigenvalues.  Next assume that $\ell$ is odd.  We have $\dim\Hom(V,V^{\ot 3})=5$ when $15\leq\ell$, and the projective order of the image of $\sigma_1$ is $2\ell$ so the image of $\B_3$ is again always infinite by Theorem \ref{maintheorem}(d).
\end{example}

\begin{example}
Consider $\g=\mathfrak{so}_7$ and let $V$ be the simple object in $\CC(\mathfrak{so}_7,q,\ell)$  corresponding to the fundamental spin representation of $\mathfrak{so}_7$.
When $\ell$ is even and $14\leq\ell$ we have $\dim\Hom(V,V^{\ot 3})=4$ and the eigenvalues of the image of $\sigma_1$ are $\chi\{1,q^{12},-q^6,-q^{10}\}$.  Since $q=e^{\pi\im/\ell}$ with $\ell$ even, $po(A)=\ell/2\geq 7$.  So provided $\ell/2\not\in\{7,8,9,10,12,15,15,20,24\}$, $G$ is infinite.  There are two representations with these eigenvalues corresponding to the two choices of
$D=\pm \sqrt{\frac{\la_2\la_3}{\la_1\la_4}}=\pm q^4$.  Choosing $D=q^4$, we obtain the following matrices from \cite{TbWz}:
$$A=\begin {pmatrix} 1& \left( {q}^{8}+{q}^{4}+1 \right) {q}^{4}&-{\frac {{q}^{8}+{q}^{4}+1}{{q}^{2}}}&-{q}^{10}\\0
&{q}^{12}&- \left( {q}^{4}+1 \right) {q}^{2}&-{q}^{10}
\\0&0&-{q}^{6}&-{q}^{10}\\0&0&0&-{
q}^{10}\end {pmatrix}$$
$$B=\begin {pmatrix} -{q}^{10}&0&0&0\\{q}^{6}&-{q}^{6}&0&0\\{q}^{16}&- \left( {q}^{4}+1 \right)
{q}^{12}&{q}^{12}&0\\-{q}^{12}&{q}^{12}+{q}^{8}+{q}^
{4}&-{q}^{8}-{q}^{4}-1&1\end {pmatrix}.$$

We first consider the case $\ell=14$.  In this case we have $A^7=B^7=I$.  We compute that (projectively) $(AB^{-1})^4=I$.  This implies that $G$ is indeed finite!  We compute that $|G/Z(G)|=168$ so that, projectively, $G$ is $\PSL(2,7)$.  We further note that there is an object $U$ so that $\dim\Hom(U,V^{\ot 3})=3$ and the corresponding braid group representation is
irreducible, given in \cite{West}.  The eigenvalues are (up to scaling and Galois conjugation) $\{1,e^{2\pi \im/7},e^{10\pi \im/7}\}$ so that by Theorem 
\ref{maintheorem}(d)(ii), this reprensentation also has finite image.

Next consider the case $\ell=18$.  Under this substitution
the eigenvalues are of the form $\{1,\omega,\omega^2,\alpha\}$ where $\omega$ is a primitive $3$rd root of unity.  So for either choice of $D$ we find that $G$ is a finite imprimitive group by Theorem \ref{maintheorem}(c)(i) and the TW classification.
\end{example}

\begin{example}
Consider $\g=\mathfrak{so}_9$ and let $V$ be the simple object in $\CC(\mathfrak{so}_9,q,\ell)$  corresponding to the fundamental spin representation of $\mathfrak{so}_9$. When $\ell$ is even and $18\leq\ell$ we have $\dim\Hom(V,V^{\ot 3})=5$ and the eigenvalues of the image of $\sigma_1$ are
$\chi\{1,q^8,-q^{14},-q^{18},q^{20}\}$, and $\gamma=\det(A)^{1/5}=\zeta_5q^{12}$ where $\zeta_5$ is a primitive $5$th root of unity.  For simplicity we will assume $\gamma=q^{12}$, although this assumption certainly can affect $|G|$.

For $\ell=18$ the matrix $A$ has repeated eigenvalues, (namely $1$ and $-q^{18}=1$) so Lemma 3.2 of \cite{TbWz}
fails and we cannot conclude that the representation is irreducible in this case.
In fact, the image for $\ell=18$ satisfy relations $A^9=B^9=(A^4(ABA)A^5(ABA))^2$
which, together with the braid relation, imples that the projective image $G$ is a quotient of a group of order $324$.  This implies that the representation is reducible, since any such group cannot have an irreducible representation of dimension $5$.

For $\ell\geq 20$ one does have distinct eigenvalues, so that the corresponding representation is irreducible.  We see that $po(A)=\ell/2$ in these cases, so
provided $\ell/2\not\in\{10,11,12\}$ $G$ is infinite.  The matrices, $A$ and $B$ we obtain are:
$$\begin {pmatrix} 1&{q}^{8}-{q}^{6}+{q}^{4}-{q}^{2}&-{q}^{14}+{q}^{12}-2\,{q}^{10}+{q}^{8}-{q}^{6}&-{q}^{16}+{q}^{14}-{q}^{12}+{
q}^{10}&{q}^{16}\\0&{q}^{8}&-{q}^{14}+{q}^{12}-{q}^{
10}&-{q}^{16}+{q}^{14}-{q}^{12}&{q}^{16}\\0&0&-{q}^{
14}&-{q}^{16}+{q}^{14}&{q}^{16}\\0&0&0&-{q}^{18}&{q}
^{18}\\0&0&0&0&{q}^{20}\end {pmatrix}$$
$$\begin {pmatrix} {q}^{20}&0&0&0&0\\{q}^{18}&-{q}^{18}&0&0&0\\{q}^{16}&-{q}^{16}+{q}^{14}&-{
q}^{14}&0&0\\{q}^{16}&-{q}^{16}+{q}^{14}-{q}^{12}&-{
q}^{14}+{q}^{12}-{q}^{10}&{q}^{8}&0\\{q}^{16}&-{q}^{
16}+{q}^{14}-{q}^{12}+{q}^{10}&-{q}^{14}+{q}^{12}-2\,{q}^{10}+{q}^{8}-
{q}^{6}&{q}^{8}-{q}^{6}+{q}^{4}-{q}^{2}&1\end {pmatrix}.$$
The case $\ell=22$ is interesting: If we set $S=A$ and $T=ABA$, we find that the $\PSL(2,11)$ relations $S^{11}=T^2=(S^4TS^6T)^2=I$ hold (projectively), so that the projective image $G$ is $\PSL(2,11)$.  This is not too surprising since $\PSL(2,\Z)$ is a quotient of $\B_3$.  The cases $\ell=20$ and $\ell=24$ do not yield finite groups.
\end{example}

\end{document}